\documentclass{aptpub}



\usepackage{longtable}

\usepackage{amsmath,amssymb,color,tikz,epsfig,amsfonts,latexsym,bbm,mathrsfs}
\usepackage{epsfig,amsfonts,latexsym}
\usepackage{hyperref}
\usepackage[comma, sort&compress,numbers]{natbib}

\usepackage{graphicx}
\usepackage{xcolor}

\numberwithin{equation}{section}

\usetikzlibrary{matrix}
\usepackage{graphicx}
\usepackage{latexsym,amsmath,amsfonts,amssymb,amsbsy,url,bbm}

\usepackage{bbm}

\newcommand{\sas}{\text{S}\alpha\text{S}}
\newcommand{\bbr}{\mathbb{R}}
\newcommand{\bbz}{\mathbb{Z}}
\newcommand{\mbfx}{\mathbf{X}}
\newcommand{\bbe}{\mathbb{E}}
\newcommand{\cale}{\mathcal{E}}
\newcommand{\calc}{\mathcal{C}}
\newcommand{\cald}{\mathcal{D}}
\newcommand{\mbft}{\mathbf{t}}

\newcommand{\frakm}{\mathfrak{M}}

\newcommand{\mbfe}{\mathbf{e}}
\newcommand{\mbfj}{\mathbf{j}}

\newcommand{\mbfv}{\mathbf{v}}
\newcommand{\bbo}{\mathbbm{1}}
\newcommand{\bbn}{\mathbb{N}}
\newcommand{\inv}{{-1}}

\DeclareMathOperator{\dtv}{d}
\DeclareMathOperator{\dtvx}{dx}
\DeclareMathOperator{\pprob}{\mathbf{P}}
\DeclareMathOperator{\exptn}{\mathbf{E}}

\newcommand{\pprobconv}{\stackrel{p}{\longrightarrow}}

\newcommand{\eqd}{\stackrel{d}{=}}

\numberwithin{equation}{section}

\newcommand{\beq}{\begin{equation}}
\newcommand{\eeq}{\end{equation}}
\newcommand{\alns}[1]{\begin{align*}#1\end{align*}}
\newcommand{\aln}[1]{\begin{align} #1 \end{align}}
\newcommand{\been}{\begin{enumerate}}
\newcommand{\een}{\end{enumerate}}

\theoremstyle{remark}

\newtheorem{expt}{Numerical~Experiment}

\theoremstyle{definition}

\allowdisplaybreaks

\shorttitle{Large sample test for length of memory of stable fields}
\authornames{A. Bhattacharya and P. Roy}

\begin{document}

\title{A large sample test for the length of memory of stationary symmetric stable random fields via nonsingular $\bbz^\text{\lowercase{d}}$-actions}

\authorone[Centrum Wiskunde \& Informatica, Amsterdam]{Ayan Bhattacharya}
\authortwo[Indian Statistical Institute, Bangalore]{Parthanil Roy}

\addressone{Stochastics group, Centrum Wiskunde \& Informatica, Amsterdam, North Holland, 1098XG, Netherlands.}
\addresstwo{Statistics and mathematics Unit, Indian Statistical Institute, 8th Mile, Mysore Road, RVCE Post,  Bangalore 560059, India.}

\begin{abstract}
Based on the ratio of two block maxima, we propose a large sample test for the length of memory  of a stationary symmetric $\alpha$-stable discrete parameter random field. We show that the power function converges to one as the sample-size increases to infinity under various classes of alternatives having longer memory in the sense of \cite{samorodnitsky:2004a}. Ergodic theory of nonsingular $\mathbb{Z}^d$-actions play a very important role in the design and analysis of our large sample test.\\
\end{abstract}


\keywords{Long range dependence; stationary $\sas$ random field; nonsingular group action; extreme value theory; statistical hypothesis testing.\\}


\ams{ 60G10; 	60G60; 62H15}{ 37A40}


%
%
%

\section{Introduction and Preliminaries}

A random field $\mbfx = \{X(\mbft) , \mbft \in \bbz^d\}$ is called a stationary, \textit{symmetric $\alpha$-stable} ($\sas$) random field if every finite linear combination $\sum_{i=1}^k a_i X_{\mbft_i + \mathbf{s}}$ is an $\sas$ random variable whose distribution does not depend on $\mathbf{s}$. Here we shall consider the non-Gaussian case (i.e., $0 < \alpha < 2$) unless mentioned otherwise.

Long range dependence is a very important property that has been observed in many real-life processes. By long range dependence of the the random field $\mbfx$, we mean the dependence between the observations $X(\mbft)$ which are far separated in $\mbft$. This concept was introduced in order  to study the measurements of the water flow in Nile river by famous British hydrologist Hurst (see \cite{hurst:1951} and \cite{hurst:1956}). Most of the classical definitions
of long range dependence appearing in literature are based on the
second order properties (e.g.- covariance, spectral density,
variance of partial sum, etc.) of stochastic processes. For
example, one of the most widely accepted definition of long range
dependence for a stationary Gaussian process is the following: we
say that a stationary Gaussian process has \emph{long range dependence} (also known as \emph{long memory}) if
its correlation function is not summable.  In the heavy tails case, however, this definition
becomes ambiguous because correlation function may not even exist and even if it exists, it may not have enough
information about the dependence structure of the process. For a detailed discussion on long range dependence, we refer to \cite{samorodnitsky:2007} and the references therein.

In the context of stationary $\sas$ processes ($0 < \alpha <
2$), instead of looking for a substitute for correlation function, the seminal work
\cite{samorodnitsky:2004a} suggested a new approach for long range dependence through a dichotomy in the long run behavior of the partial maxima. A partition of the underlying parameter space (formally defined later) has been suggested in the aforementioned reference which causes the dichotomy. This dichotomy has been studied for $d \ge 2$ in \cite{roy:samorodnitsky:2008}. Phase transitions in many other probabilistic features of stationary $\sas$ random fields have been connected to the same partition of the parameter space; see e.g., \cite{mikosch:samorodnitsky:2000}, \cite{resnick:samorodnitsky:2004}, \cite{roy:2010}, \cite{fasen:roy:2016}, \cite{panigrahi2017maximal}.

The fact that the law of $\mbfx$ is invariant under the group action of shift transformation on the index set $\bbz^d$ (stationarity) and certain rigidity properties of $L^\alpha$ spaces ($0<\alpha < 2$) are used in  \cite{rosinski:1995} (for $d=1$) and \cite{rosinski:2000} (for  $d \ge 2$) to show that there always exists an integral representation of the form
\aln{
X(\mbft) \eqd \int_\bbe c_\mbft(x) \Big( \frac{\dtv m \circ \phi_\mbft}{\dtv m}(x) \Big)^{1/\alpha} f \circ \phi_\mbft (x) M(\dtv x), \label{chap5_eq:rosinski_representation}
}
where $M$ is an $\sas$ random measure on a standard Borel space $(\bbe, \cale)$ with $\sigma$-finite control measure $m$, $f \in L^\alpha(\bbe,m)$ (a deterministic function), $\{\phi_\mbft\}$ is a non-singular $\bbz^d$-action on $(\bbe,m)$ (i.e., each $\phi_\mbft : \bbe \to \bbe$ is measurable and invertible, $\phi_0$ is the identity map, $\phi_{\mbft_1} \circ \phi_{\mbft_2} = \phi_{\mbft_1 + \mbft_2}$ for all $\mbft_1, \mbft_2 \in \bbz^d$ and each $m \circ \phi_\mbft^{-1}$ is an equivalent measure of $m$) and $\{c_\mbft\}$ is a measurable cocycle for the nonsingular action $\{\phi_\mbft\}$ taking values in $\{+1,-1\}$ (i.e., each  $c_\mbft : \bbe \to \{+1,-1\}$ is  measurable map such that for all $\mbft_1, \mbft_2 \in \bbz^d$, $c_{\mbft_1 + \mbft_2}(x) = c_{\mbft_2}(x) c_{\mbft_1} (\phi_{\mbft_2}(x)) $ for  all $x \in \bbe$).

As a stationary $\sas$ random field can be uniquely specified in terms of a function in $L^\alpha(\bbe, m)$, a nonsingular action and a cocycle, we consider the following parameter space for a stationary $\sas$ random field
\aln{
\Theta = \bigg\{ \Big( f, \{\phi_\mbft\}, \{c_\mbft\} \Big) : f \in L^\alpha(\bbe, m), \{\phi_\mbft\} \text{ is a nonsingular action}, ~ \{ c_\mbft\} \text{ is  a cocycle} \bigg\}. \label{chap5_eq:defn_theta}
}

\noindent Now based on the nonsingular action, we can get  a decomposition of $\bbe$ (into two subsets) which is known as Hopf decomposition as described below.  A set $W$ is called a wandering set for the nonsingular $\bbz^d$-action $\{\phi_\mbft\}$ on $(\bbe,m)$ if $\{\phi_\mbft(W) : \mbft \in \bbz^d\}$ is pairwise disjoint collection of subsets of $\bbe$. Following Proposition 1.6.1 in \cite{aaronson:1997}, we get that $\bbe$ can be decomposed into two disjoint and invariant (with respect to $\{\phi_\mbft\}$) subsets $\calc$ and $\cald$ such that for some wandering set $W \subset \bbe$, $\cald = \cup_{\mbft \in \bbz^d} \phi_\mbft(W)$ and   $\calc$ does not have any wandering set of positive measure. $\calc$ and $\cald$ are called the conservative and dissipative parts of $\{\phi_\mbft\}$, respectively. If $\bbe = \calc$, then we call the nonsingular $\bbz^d$-action $\{\phi_\mbft\}$ conservative. If  $\bbe = \cald$, then   $\{\phi_\mbft\}$ is called  dissipative. An example of a dissipative $\bbz^d$-action is the shift action: take $\bbe = \mathbb{R}^d$ (with $m$ being the Lebesgue measure) and for each $\mbft \in \mathbb{Z}^d$, define $\phi_\mbft(\mathbf{s}) = \mathbf{s}+\mbft$, $\mathbf{s} \in \mathbb{R}^d$. Section~\ref{chap5_sec:asymptotics_alternative} contains examples of conservative $\bbz^d$-actions. Roughly speaking, conservative actions tend to come back often while dissipative actions tend to move away.

 Following \cite{rosinski:1995}, \cite{rosinski:2000} and \cite{roy:samorodnitsky:2008}, and denoting the integrand  in \eqref{chap5_eq:rosinski_representation} by $f_\mbft(x)$,
\aln{
X(\mbft) \eqd \int_\calc  f_\mbft(x)  M(\dtv x)    + \int_\cald  f_\mbft(x) M(\dtv x) =: X^\calc(\mbft) + X^\cald(\mbft), \;\; \; \mbft \in \bbz^d, \label{chap5_eq:decomp_stable_random_field}
}
where $\mbfx^\calc = \{X^\calc(\mbft), \mbft \in \bbz^d\}$ and $\mbfx^\cald = \{X^\cald(\mbft), \mbft \in \bbz^d\}$ are two independent stationary $\sas$ random field generated by conservative and dissipative  nonsingular $\bbz^d$-actions, respectively. It is important to note that the stationary $\sas$ random field generated by a dissipative nonsingular $\bbz^d$-action admits mixed moving average representation (see \cite{surgailis:rosinski:marandekar:cambanis:1993} and \eqref{chap5_eq:mixed_moving_avg} below).

Based on the notion of partial block maxima, it has been established in \cite{samorodnitsky:2004a} and \cite{roy:samorodnitsky:2008} that stationary $\sas$ random fields generated by conservative actions have longer memory than those generated by a nonsingular action with a non trivial dissipative part. This has  formalized the intuition that ``conservative action keeps coming back" (i.e., same value of the random measure $M$ contributes to the observations $X(\mbft)$ which are far separated in $\mbft$) and hence induces longer memory. Let for all $n \in \bbn$,
\aln{ \label{chap5_eq:defn_Mn}
Box(n) = \{ \mbfj =(j_1, \ldots, j_d) \in \bbz^d: |j_i| \le n \text{ for } 1 \le i \le d  \}
}
be the block containing the origin with size $(2n+1)^d$ in $\bbz^d$. We define the partial block maxima for the stationary $\sas$ random field $\mbfx$ as
\alns{
M_n = \max_{\mbfj \in Box(n)} |X(\mbfj)|, ~~~~~~~~~ n\in \bbn.
}

The asymptotic behaviour of the partial block maxima $M_n$ is related to the deterministic sequence
\aln{ \label{chap5_eq:defn_bn}
B_n = \bigg( \int_\bbe \max_{\mbfj \in Box(n)} |f_\mbfj(x)|^\alpha m(\dtv x) \bigg)^{1/\alpha}.
}
 Note that by Corollary 4.4.6 of \cite{samorodnitsky:taqqu:1994},  $B_n$ is completely specified by the parameters associated to the $\sas$ random field and does not depend on the choice of the integral representation. We  shall recall the results on rate of growth of $\{B_n\}$ from \cite{roy:samorodnitsky:2008} (Proposition 4.1). It is expected that the rate of growth of $B_n$ will be slower if the underlying group action is conservative. Indeed, if $\{\phi_\mbft : \mbft \in \bbz^d\}$ is conservative, then
\aln{ \label{chap5_eq:bn_rate_consevative}
\lim_{n \to \infty} \frac{1}{(2n+1)^{d/\alpha}} B_n =0.
}
In the other case, we need the mixed moving average representation to describe the limit. A stable random field is called a mixed moving average (see \cite{surgailis:rosinski:marandekar:cambanis:1993}) if it is of the form
\aln{ \label{chap5_eq:mixed_moving_avg}
\mbfx \eqd \Bigg\{ \int_{W \times \bbz^d} f(u,\mathbf{s} -\mbft) M(\dtv u, \dtv \mathbf{s}) : \mbft \in \bbz^d \Bigg\},
}
where $f \in L^\alpha(W \times \bbz^d, \nu \otimes l)$, $l$ is the counting measure on $\bbz^d$, $\nu$ is a $\sigma$-finite measure on a standard Borel space $(W, \mathcal{W})$ and the control measure $m$ of $M$ equals $\nu \otimes l$.
It was shown in \cite{rosinski:1995}, \cite{rosinski:2000} and \cite{roy:samorodnitsky:2008} that a stationary $\sas$ random field is generated by a dissipative action if and only if it is a mixed moving average with the integral representation \eqref{chap5_eq:mixed_moving_avg}. In this case,
\aln{ \label{chap5_eq:bn_rate_mixedmoving}
\lim_{n \to \infty} \frac{1}{(2n+1)^{d / \alpha}} B_n = \Big( \int_W (g(u))^\alpha \nu(du) \Big)^{1/\alpha} \in (0, \infty),
}
where for every $u \in W$
\aln{ \label{chap5_eq:g_mixedmoving}
g(u) = \max_{\mathbf{s} \in \bbz^d} |f(u,\mathbf{s})|.
}
We shall denote the right hand side of \eqref{chap5_eq:bn_rate_mixedmoving} by $K_\mbfx$ which depends solely on $\mbfx$ and not on the integral representation.

Using the above facts, it has been established that, if the $\sas$ random field is not generated by the conservative action then
\aln{ \label{chap5_eq:maxima_dissipative}
(2n+1)^{-d/\alpha} M_n \Rightarrow C_\alpha^{1/\alpha} K_\mbfx Z_\alpha,
}
where $K_\mbfx$ is as above, $Z_\alpha$ is a standard Fr\'{e}chet($\alpha$) random variable with distribution function
\alns{
\pprob(Z_\alpha \le  z) =  \begin{cases} e^{-z^{-\alpha}} & \text{ if } z>0, \\  0 & \text{ if } z \le 0,\end{cases}
}
and
\aln{
C_\alpha = \bigg( \int_0^\infty x^{-\alpha} \sin x \dtvx \bigg) = \begin{cases} \frac{1-\alpha}{\Gamma(2-\alpha) \cos (\pi \alpha/2)} ~~~~ & \text{ if } \alpha \neq 1, \\ \frac{2}{\pi} & \text{ if } \alpha =1.
 \end{cases} \label{chap5_eq:sas_constant}
}
On the other hand, if the underlying group action is conservative then
\aln{ \label{chap5_eq:maxima_conservative}
(2n+1)^{-d/\alpha} M_n \pprobconv 0.
}
See Theorem 4.3 in \cite{roy:samorodnitsky:2008} and Theorem 4.1 in \cite{samorodnitsky:2004a}.

Note that the dichotomy between \eqref{chap5_eq:maxima_dissipative} and \eqref{chap5_eq:maxima_conservative} can be justified by the intuitive reasoning that the longer memory prevents erratic changes in $X_\mbft$ causing the maxima to grow slower. In Gaussian case, this phenomenon occurs in the form of comparison lemma; see, e.g., Corollary 4.2.3 in  \cite{leadbetter:lindgren:rootzen:1983}.


The effect of a transition from conservative to dissipative actions has been investigated for various other features of stationary $\sas$ random fields.
For example, the ruin probability of negative drifted random walk with steps from a stationary ergodic stable processes, has been studied in \cite{mikosch:samorodnitsky:2000}. It has been observed that the ruin is more likely if the  group action is conservative. The point processes associated to a stationary $\sas$ random field is analysed in \cite{resnick:samorodnitsky:2004} (for $d=1$) and \cite{roy:2010} (for $d \ge 2$). It is observed that the point process converges weakly to a Poisson cluster process if the  group action is not conservative and in the conservative case, it does not remain tight due to presence of clustering. The large deviations issues for point process convergence has been addressed in \cite{fasen:roy:2016}, where different large deviation behavior is observed depending on the ergodic theoretic properties of the underlying nonsingular actions.

Stationary $\sas$ random fields have also been studied from statistical perspective (see \cite{samorodnitsky:taqqu:1994},  \cite{karcher:spodarev:2011}, \cite{karcher:shmileva:spodarev:2013}). Different inference problems associated to the long range dependence for finite and infinite variance processes has been addressed in the literature; see for example, \cite{conti:degiovani:stoev:taqqu:2008}, \cite{stoev:taqqu:2003}, \cite{montanari:taqqu:teverovsky:1999}, \cite{giraitis:taqqu:1999}, \cite{beran:bhansali:ocker:1998}, \cite{beran:1995}, \cite{robinson:1995} and references therein. There are real-life data such as teletraffic data (\cite{cappe:moulines:pequet:petropulu:yang:2002}) which exhibits heavy-tail phenomenon and long range dependence. Motivated by all these works, the decomposition of the parameter space suggested in \cite{roy:samorodnitsky:2008} and its effect on various probabilistic aspects of $\sas$ random fields, a natural question comes in mind: \textit{is it possible to design a hypothesis testing problem which will detect the presence of long memory in the observed stationary $\sas$ random field?} In the following paragraph, we  formulate the problem.

Motivated by \cite{samorodnitsky:2004a} and \cite{roy:samorodnitsky:2008} and the other related works mentioned above,  we shall consider the following decomposition of the parameter space $\Theta$ into  $\Theta_0$ and $\Theta_1$. We define $\Theta_1$ as
\aln{
\Theta_1 = \bigg\{ \Big(f, \{\phi_\mbft\}, \{c_\mbft\} \Big) \in \Theta : \{\phi_\mbft\} \text{ is  conservative}  \bigg\} \label{chap5_eq:defn_theta1}
}
and $\Theta_0 = \Theta \setminus \Theta_1 $.  In this article, our aim is to design a large sample statistical test for  testing
\aln{
H_0: \theta \in \Theta_0 \hspace{3cm} \text{ against } \hspace{3cm} H_1 : \theta \in \Theta_1 \label{chap5_eq:testing_problem}
}
where $\theta=(f, \{\phi_\mbft\}, \{c_\mbft\})$ is the parameter associated to the observed stationary $\sas$ random field defined by \eqref{chap5_eq:rosinski_representation}.

This article is organized as follows. In Section \ref{chap5_sec:test_stat}, we shall present a large sample test (based on the ratio of two appropriately scaled block maxima) for testing $H_0$ vs. $H_1$ along with its asymptotics under both null and alternative. In particular, our test will become consistent for a reasonably broad class of alternatives. Examples of such alternatives are given in Section \ref{chap5_sec:asymptotics_alternative} followed by numerical experiments in Section~\ref{sec:simulation}. Finally, proofs of our results are discussed in Section \ref{chap5_sec:asymptotics_null}.

\section{Proposed Large Sample Test Based on Block Maxima}\label{chap5_sec:test_stat}

Let $\{\mbfe_i : 1 \le i \le d\}$ be the $d$ unit vectors in $\bbz^d$ such that the $i^{th}$ component of $\mbfe_i$ is $1$ and the other components are $0$. Fix $0< \varrho < 1$. Let
$$U_n = (2n+1)^{-d/\alpha} \max_{\mbfj \in Box(n)} |X(\mbfj)|$$
and
$$V_n = (2[n^\varrho] +1)^{-d/\alpha} \max_{\mbfj \in (2n + [n^\varrho]) \mbfe_1 + Box([n^\varrho])} |X(\mbfj)|.$$
In other words, $U_n$ is the properly scaled block maxima for $Box(n)$ containing origin as the centre and $V_n$ is the properly scaled block maxima for shifted $Box([n^\varrho])$ whose centre is sufficiently separated from $Box(n)$. To test the hypotheses \eqref{chap5_eq:testing_problem}, we define the test statistic $T_n$ as the ratio of two partial block maxima $U_n$ and $V_n$, that is
\alns{
T_n = \frac{U_n}{V_n} = \bigg(\frac{2[n^\varrho]+1}{2n +1} \bigg)^{d/\alpha} \frac{\max_{\mbfj \in Box(n)} |X(\mbfj)|}{\max_{\mbfj \in (2n + [n^\varrho]) \mbfe_1 + Box([n^\varrho])} |X(\mbfj)|}.
}
We shall derive the weak limit of the test statistic $T_n$ under the null hypothesis with  the help of following theorem.

\begin{thm} \label{chap5_thm:sas_random_field_moving_average}
Suppose that the stationary $\sas$ random field $\mathbf{X}$ is generated by a non-conservative action and hence the dissipative part $X^\cald$ admits a non-trivial moving average representation  \eqref{chap5_eq:mixed_moving_avg}, then
\aln{
(U_n, V_n) \Rightarrow (Y_1, Y_2),
}
where $Y_i$'s are independent copies of $Y$ with  distribution function
\aln{
\pprob ( Y \le y ) =\begin{cases} \exp \bigg\{ - C_\alpha K_\mathbf{X}^\alpha y^{-\alpha} \bigg\} & \text{ if } y>0, \\ 0 & \text{ if } y \le 0. \end{cases}
}
and $C_\alpha$ defined in \eqref{chap5_eq:sas_constant}.
\end{thm}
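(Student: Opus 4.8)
The plan is to reduce the assertion to a statement about mixed moving averages and then exploit their approximate finite-range dependence; throughout I use the dichotomy \eqref{chap5_eq:maxima_dissipative}--\eqref{chap5_eq:maxima_conservative} recalled above.

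\emph{Step 1 (reduction to the dissipative part).} Decompose $\mbfx = \mbfx^\calc + \mbfx^\cald$ as in \eqref{chap5_eq:decomp_stable_random_field}, with $\mbfx^\calc$, $\mbfx^\cald$ independent and $\mbfx^\calc$ generated by a conservative action, and write $U_n^\cald$, $V_n^\cald$ for the statistics obtained on replacing $\mbfx$ by $\mbfx^\cald$ in the definitions of $U_n$, $V_n$. By the triangle inequality $|U_n - U_n^\cald| \le (2n+1)^{-d/\alpha} \max_{\mbfj \in Box(n)} |X^\calc(\mbfj)|$, which tends to $0$ in probability by \eqref{chap5_eq:maxima_conservative} applied to $\mbfx^\calc$; using stationarity of $\mbfx^\calc$ and $[n^\varrho] \to \infty$, the analogous bound gives $|V_n - V_n^\cald| \to 0$ in probability. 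Also the marginal convergences $U_n \Rightarrow Y$ and $V_n \Rightarrow Y$ follow directly from \eqref{chap5_eq:maxima_dissipative} (for $V_n$, via stationarity and $[n^\varrho] \to \infty$), since $Y \eqd C_\alpha^{1/\alpha} K_\mbfx Z_\alpha$. Hence it suffices to prove that $U_n^\cald$ and $V_n^\cald$ are asymptotically independent, and from now on I may assume $\mbfx = \mbfx^\cald$ is a mixed moving average $X(\mbft) = \int_{W \times \bbz^d} f(u, \mathbf{s} - \mbft)\, M(du, d\mathbf{s})$ as in \eqref{chap5_eq:mixed_moving_avg}.

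\emph{Step 2 (truncation and independence at finite range).} For $L \in \bbn$ put $f^{(L)}(u, \mathbf{s}) = f(u, \mathbf{s})\,\bbo(\mathbf{s} \in Box(L))$ and $f_{(L)} = f - f^{(L)}$, and let $X^{(L)}$, $X_{(L)}$ be the corresponding mixed moving averages, so that $X = X^{(L)} + X_{(L)}$ and both parts are again generated by dissipative actions (with $f^{(L)}$ non-trivial once $L$ is large). Set $g^{(L)}(u) = \max_{\mathbf{s} \in \bbz^d} |f^{(L)}(u, \mathbf{s})| = \max_{\mathbf{s} \in Box(L)} |f(u, \mathbf{s})|$, $K^{(L)} = (\int_W (g^{(L)})^\alpha\, d\nu)^{1/\alpha}$ and $K_{(L)} = (\int_W (\max_{\mathbf{s} \notin Box(L)} |f(u, \mathbf{s})|)^\alpha\, d\nu)^{1/\alpha}$; then $g^{(L)} \uparrow g$ yields $K^{(L)} \uparrow K_\mbfx$ by monotone convergence, while $K_{(L)} \to 0$ as $L \to \infty$ by dominated convergence, the dominating function $g^\alpha$ being $\nu$-integrable by \eqref{chap5_eq:bn_rate_mixedmoving}. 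The crucial point is that $X^{(L)}(\mbft)$ is measurable with respect to the restriction of $M$ to $W \times (\mbft + Box(L))$, so $\{X^{(L)}(\mbfj) : \mbfj \in Box(n)\}$ is a function of $M$ on $W \times Box(n + L)$, whereas $\{X^{(L)}(\mbfj) : \mbfj \in (2n + [n^\varrho]) \mbfe_1 + Box([n^\varrho])\}$ is a function of $M$ on $W \times ((2n + [n^\varrho]) \mbfe_1 + Box([n^\varrho] + L))$. For $n > 2L$ these two index regions of $W \times \bbz^d$ are disjoint (compare first coordinates), so by the independently scattered property of $M$ the statistics $U_n^{(L)}$, $V_n^{(L)}$ (defined as $U_n$, $V_n$ but with $X^{(L)}$ in place of $X$) are independent. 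Since \eqref{chap5_eq:maxima_dissipative} applies to $X^{(L)}$, each of $U_n^{(L)}$, $V_n^{(L)}$ converges in law to the Fr\'{e}chet-type distribution with scale $K^{(L)}$, whence $(U_n^{(L)}, V_n^{(L)}) \Rightarrow (Y_1^{(L)}, Y_2^{(L)})$, two independent copies of it.

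\emph{Step 3 (converging together).} The truncation error satisfies $|U_n^\cald - U_n^{(L)}| \le (2n+1)^{-d/\alpha} \max_{\mbfj \in Box(n)} |X_{(L)}(\mbfj)|$, and by \eqref{chap5_eq:maxima_dissipative} applied to $X_{(L)}$ the right side converges in law to $C_\alpha^{1/\alpha} K_{(L)} Z_\alpha$; similarly for $|V_n^\cald - V_n^{(L)}|$. Since $\pprob(C_\alpha^{1/\alpha} K_{(L)} Z_\alpha > \epsilon) \to 0$ as $L \to \infty$ for each $\epsilon > 0$, a routine three-term inequality — bounding $\pprob(U_n^\cald \le y_1,\, V_n^\cald \le y_2)$ above and below by $\pprob(U_n^{(L)} \le y_1 \pm \epsilon,\, V_n^{(L)} \le y_2 \pm \epsilon)$ plus the two error probabilities, then letting $n \to \infty$, then $L \to \infty$ (using $K^{(L)} \uparrow K_\mbfx$), then $\epsilon \downarrow 0$, the limiting law being continuous — gives $\pprob(U_n^\cald \le y_1,\, V_n^\cald \le y_2) \to \pprob(Y \le y_1)\,\pprob(Y \le y_2)$ for all $y_1, y_2 > 0$. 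Combined with Step 1 this is exactly $(U_n, V_n) \Rightarrow (Y_1, Y_2)$ with $Y_1, Y_2$ independent copies of $Y$. I expect the main work to be the bookkeeping in Steps 2--3: verifying the disjointness of the two supporting regions (this is where $n$ must dominate the truncation level $L$ and where the spatial separation $2n + [n^\varrho]$ built into $V_n$ is used) and confirming that $K_{(L)} \to 0$, which is the concrete form of the fact that an $L^\alpha(W \times \bbz^d)$ mixed-moving-average kernel is approximable by finite-range kernels; everything else is immediate from the dichotomy quoted above.
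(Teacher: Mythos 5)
Your proposal is correct and follows essentially the same route as the paper: truncate the mixed-moving-average kernel at range $L$, use disjointness of the supporting regions (for $n$ large relative to $L$) to get exact independence of the two truncated block maxima, apply the dissipative-case maxima limit \eqref{chap5_eq:maxima_dissipative} both to the truncated field and to the complementary field (whose scale $K_{(L)}\to 0$), and finish with a converging-together argument as $L\to\infty$. The only cosmetic difference is that the paper derives the truncation-error bound $\bigl|\max_{\mbfj}|X(\mbfj)|-\max_{\mbfj}|X(\mbfj,L)|\bigr|\le\max_{\mbfj}|X^{(c)}(\mbfj,L)|$ via a LePage series representation, whereas you obtain the same bound directly from linearity of the integral against the common random measure $M$.
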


\begin{cor}\label{chap5_cor:asymptotic_distn_null_stat}
Under the assumptions of Theorem \ref{chap5_thm:sas_random_field_moving_average}, $T_n \Rightarrow T$ where $T$ has the distribution function
\aln{
F_T(t) := \pprob(T \le t) = \frac{1}{1+t^{-\alpha}}. \label{chap5_eq:distn_T}
}
\end{cor}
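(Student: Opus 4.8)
The plan is to obtain the corollary directly from Theorem~\ref{chap5_thm:sas_random_field_moving_average} by the continuous mapping theorem, followed by a short distributional computation for the ratio of two independent Fr\'echet variables.

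First I would note that $T_n$ is almost surely well defined: each $X(\mbfj)$ is an $\sas$ random variable, so $V_n > 0$ with probability one. The map $h(x,y) = x/y$ is continuous on $\{(x,y) \in \bbr^2 : y \neq 0\}$, and the limit law of $(U_n, V_n)$, namely the law of $(Y_1, Y_2)$, is concentrated on $\{y > 0\}$ because $Y_2$ has a density supported on $(0,\infty)$. Hence the discontinuity set of $h$ is null for the limiting distribution, and the continuous mapping theorem yields $T_n = h(U_n, V_n) \Rightarrow h(Y_1, Y_2) = Y_1/Y_2 =: T$.

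Next I would identify the law of $T$. Since $Y_i \eqd (C_\alpha K_\mathbf{X}^\alpha)^{1/\alpha} Z_i$ with $Z_1, Z_2$ i.i.d.\ standard Fr\'echet($\alpha$), the scale constant cancels in the ratio, so $T \eqd Z_1/Z_2$. A clean way to finish is to set $E_i = Z_i^{-\alpha}$, which are i.i.d.\ standard exponential; then for $t > 0$, $\{T \le t\} = \{Z_1 \le t Z_2\} = \{E_1 \ge t^{-\alpha} E_2\}$, and using the standard fact that $\pprob(E_1 \ge x E_2) = (1+x)^{-1}$ for the ratio of independent exponentials (with $x = t^{-\alpha}$) gives $\pprob(T \le t) = 1/(1+t^{-\alpha})$. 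Equivalently, conditioning on $Z_2$ and integrating,
\[
\pprob(T \le t) = \int_0^\infty \exp\{-(ts)^{-\alpha}\}\,\alpha s^{-\alpha-1} e^{-s^{-\alpha}}\, ds = \int_0^\infty e^{-(1+t^{-\alpha})u}\, du = \frac{1}{1+t^{-\alpha}},
\]
after the substitution $u = s^{-\alpha}$; for $t \le 0$ the value is $0$ since $T > 0$ a.s.

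I do not anticipate a genuine obstacle. The only points deserving attention are that the set of discontinuities of the division map is negligible under the limit law, which follows from the absolute continuity of $Y_2$, and the elementary change of variables in the integral.
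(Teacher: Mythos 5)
Your proposal is correct and follows essentially the same route as the paper: apply the continuous mapping theorem to $(U_n,V_n)\Rightarrow(Y_1,Y_2)$ from Theorem~\ref{chap5_thm:sas_random_field_moving_average} (using $Y_2>0$ a.s.) and then compute the law of $Y_1/Y_2$. The only difference is cosmetic: the paper computes the ratio density via a two-dimensional Jacobian transformation and integrates out the second coordinate, whereas you cancel the scale $(C_\alpha K_\mathbf{X}^\alpha)^{1/\alpha}$, reduce to a ratio of i.i.d.\ standard Fr\'echet variables, and finish with the exponential transform $E_i=Z_i^{-\alpha}$ --- a slightly shorter calculation yielding the same distribution function \eqref{chap5_eq:distn_T}.
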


\begin{proof}[Proof of Corollary \ref{chap5_cor:asymptotic_distn_null_stat}]
 Using continuous mapping theorem and the fact that $Y_2 >0$ almost surely, we get that
\aln{
T_n  \Rightarrow T := \frac{Y_1}{Y_2}.
}
 The distribution of $T$ will be derived using the joint distribution of $Y_1$ and $Y_2$. It is clear that, the joint probability density function is
\alns{
h_{Y_1, Y_2}(y_1, y_2) = (C_\alpha K_\mbfx^\alpha \alpha)^2 (y_1 y_2)^{-\alpha -1} e^{- C_\alpha K_X^\alpha (y_1^{-\alpha} + y_2^{-\alpha})}, ~~~~~~~~ y_1 , y_2 >0.
}
 We follow standard substitution procedure by putting $t= y_1 y_2^\inv$ and $v=y_2$ which in turn gives us $y_1 = tv$ and $y_2=v$. It is very easy to check that the associated modulus of Jacobian of transformation is $v$ as $v > 0$. Hence we get the joint distribution of $(T, Y_2)$ as
\alns{
h_{T, Y_2}(t, y_2) = (\alpha C_\alpha K_\mbfx^\alpha)^2 t^{-\alpha -1} y_2^{-2 \alpha -1} e^{ - C_\alpha K_\mbfx^\alpha y_2^{-\alpha}(1+t^{-\alpha})}, ~~~~ t> 0,   y_2 > 0.
}
 Now to get the distribution of $T$, we have to integrate on the whole range for $y_2$. Again using standard substitution
$$z = y_2^{-\alpha} (1+ t^{-\alpha}) C_\alpha K_\mbfx^\alpha$$
we get that
\aln{
h_T(t) & = \alpha \frac{t^{-\alpha -1}}{(1+ t^{-\alpha})^2} \int_0^\infty z^{2-1} e^{-z} \dtv z \nonumber \\
& = \frac{\alpha t^{-\alpha -1}}{(1+ t^{-\alpha})^2}, ~~~~~~ t>0.
}
Hence it is easy to see that \eqref{chap5_eq:distn_T} holds for all $t > 0$.
\end{proof}

We want to compute $\tau_\beta$ such that $\pprob(T< \tau_\beta) = \beta$. An easy computation yields that,
\aln{
\tau_\beta = \bigg( \frac{\beta}{1- \beta}\bigg)^{1/\alpha}. \label{chap5_eq:lower_beta_cut_off_T}
}


\begin{remark}
Note that the distance between the two blocks is not showing up in the asymptotics of $T_n$ under the null hypothesis because the shorter memory (i.e., weaker dependence) is making the two blocks almost independent in the long run. Therefore, the asymptotic null distribution of the test-statistic becomes rather simple (ratio of two i.i.d.\ random variables as seen in Corollary~\ref{chap5_cor:asymptotic_distn_null_stat}) and the computation of the critical value \eqref{chap5_eq:lower_beta_cut_off_T} becomes very easy.
\end{remark}

\begin{remark} Even though our random field has a lot of unknown parameters (more precisely, the function $f \in L^\alpha(\mathbb{E}, m)$, the cocycle $\{c_t\}_{t \in \mathbb{Z}^d}$ and the group action $\{\phi_t\}_{t \in \mathbb{Z}^d}$), only the underlying group action plays a role in the asymptotic test procedure described in this work. Even this parameter does not need to be explicitly estimated in our method of testing. Therefore, our test is free of any estimation procedure and all our asymptotic results work well without any additional correction making this test applicable to real-life situations.
\end{remark}

The following theorem gives the asymptotics for the test statistic $T_n$ for a very broad class of alternatives.

\begin{thm} \label{chap5_thm:asymptotics_alternative}
Let $\mbfx$ be generated by a conservative $\bbz^d$-action $\{\phi_\mbft\}$. If there exists an increasing sequence of positive real numbers, $\{d_n\}$ such that
\aln{
d_n = n^{d/\alpha - \eta} L(n), \label{chap5_eq:smaller_order_magnitude}
}
 where $0 < \eta \le d/\alpha$ and $L(n)$ is a slowly varying function of $n$ and $\{ d_n^\inv M_n \}_{n \ge 1}$ and  $\{ d_n M_n^\inv \}_{n \ge 1}$  are tight sequences of random variables, then we have $$T_n \pprobconv 0.$$
\end{thm}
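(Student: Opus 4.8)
The plan is to write $T_n$ as a deterministic sequence tending to $0$ multiplied by two stochastically bounded factors, and then conclude. By stationarity of $\mathbf X$, the shifted block maximum $M'_{[n^\varrho]}$ appearing in the denominator of $V_n$ (so that $V_n=(2[n^\varrho]+1)^{-d/\alpha}M'_{[n^\varrho]}$) satisfies $M'_{[n^\varrho]}\eqd M_{[n^\varrho]}$. I would then factor
\[
T_n=\frac{U_n}{V_n}=\left(\frac{2[n^\varrho]+1}{2n+1}\right)^{d/\alpha}\frac{d_n}{d_{[n^\varrho]}}\cdot\frac{M_n}{d_n}\cdot\frac{d_{[n^\varrho]}}{M'_{[n^\varrho]}}=:c_n\,R_n\,S_n .
\]
Here $\{R_n\}=\{d_n^\inv M_n\}$ is tight by hypothesis; and $S_n\eqd d_{[n^\varrho]}M_{[n^\varrho]}^\inv$, so, since $[n^\varrho]\to\infty$ and $\{d_m M_m^\inv\}_{m\ge1}$ is tight, the sequence $\{S_n\}$ is tight as well. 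Hence $R_nS_n=O_p(1)$, and it remains only to prove that the deterministic sequence $c_n\to0$, because a deterministic null sequence times a stochastically bounded one is $o_p(1)$.

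The second, and essentially only substantive, step is the deterministic estimate $c_n\to0$. Substituting $d_n=n^{d/\alpha-\eta}L(n)$ and using $2m+1\sim2m$, $[n^\varrho]\sim n^\varrho$ and $L([n^\varrho])\sim L(n^\varrho)$ (slow variation), one gets $c_n\sim n^{-(1-\varrho)\eta}\,L(n)/L(n^\varrho)$. Since $0<\varrho<1$ and $\eta>0$, the polynomial factor decays; the main obstacle is to rule out that the slowly varying ratio $L(n)/L(n^\varrho)$ grows too fast. This cannot be handled by $L(\lambda x)/L(x)\to1$, because the arguments $n$ and $n^\varrho$ differ by the \emph{growing} factor $n^{1-\varrho}$ rather than a bounded one. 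Instead I would invoke the Potter bounds (the uniform convergence theorem for slowly varying functions): for every $\delta>0$ there is $C_\delta<\infty$ with $L(n)/L(n^\varrho)\le C_\delta(n/n^\varrho)^\delta=C_\delta n^{(1-\varrho)\delta}$ for all large $n$. Choosing $\delta<\eta$ yields $c_n\le C_\delta' n^{-(1-\varrho)(\eta-\delta)}\to0$, which completes the proof.

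Finally, I would remark on the role of the conservativity assumption, which enters only at the level of interpretation: it places $\theta\in\Theta_1$, so the conclusion $T_n\pprobconv0$ is a statement about the behaviour of the test under $H_1$; and it is consistent with the requirement $\eta>0$, since by \eqref{chap5_eq:bn_rate_consevative} one has $B_n=o\big((2n+1)^{d/\alpha}\big)$ in the conservative case, which is precisely what forces $M_n$ to grow strictly slower than $n^{d/\alpha}$ and so permits tightness of $\{d_n^\inv M_n\}$ and $\{d_n M_n^\inv\}$ with a sub-$n^{d/\alpha}$ normalization. Everything besides the slowly-varying estimate — the factorization, the identification of the law of the shifted block maximum via stationarity, and the passage from tightness of $R_n,S_n$ to $c_nR_nS_n\pprobconv0$ — is routine.
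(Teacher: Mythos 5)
Your proposal is correct and follows essentially the same route as the paper's proof: both factor $T_n$ into a deterministic sequence times a product of tight factors ($d_n^{-1}M_n$ and, via stationarity of the field, the shifted block maximum normalized by $d_{[n^\varrho]}$), and both kill the deterministic factor $n^{-(1-\varrho)\eta}L(n)/L(n^\varrho)$ with Potter bounds. Only a terminological nitpick: the inequality you use is the Potter bound proper, not the uniform convergence theorem, but that does not affect the argument.
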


So we reject the null hypothesis $H_0$ against the class of alternatives considered in Theorem \ref{chap5_thm:asymptotics_alternative}, if $T_n < \tau_\beta$. This gives a large sample level-$\beta$ test for $H_0$ against $H_1$. Theorem \ref{chap5_thm:asymptotics_alternative} ensures that such a test is consistent. In the following section, we shall discuss some examples which satisfy the conditions stated in above theorem. We also derive the empirical power in a few examples based on numerical experiments.

\section{Important Classes of Alternatives} \label{chap5_sec:asymptotics_alternative}

In this section, we present a few important examples from the alternative which satisfy the hypotheses of Theorem \ref{chap5_thm:asymptotics_alternative} and hence our test becomes consistent.

\begin{example}
We consider a stationary $\sas$ random field indexed by $\bbz^2$, with the $\bbz^2$ action $\{\phi_{(i,j)}\}_{(i,j) \in \bbz^2}$ on $\bbe= \bbr$ given by
\alns{
\phi_{(i,j)}(x) = x + i + j \sqrt{2}, ~~~~ x \in \bbr
}
with $m$ as  Lebesgue measure on $\bbr$. From Example 6.3 in \cite{roy:samorodnitsky:2008}, it is clear that
\alns{
\frac{1}{n^{1/\alpha}} M_n \Rightarrow \Big( (1+\sqrt{2}) C_\alpha \Big)^{1/\alpha} Z_\alpha.
}
Hence Theorem \ref{chap5_thm:asymptotics_alternative} with $d_n = n^{1/\alpha}$ applies and we get
\alns{
T_n \pprobconv 0
}
as $n \to \infty$. So the test rejects the null hypothesis $H_0$ if $T_n < \tau_\beta$ is consistent.
\end{example}

\begin{example} \label{chap5_subsec:subgaussian}

Consider a random field which has an integral representation of the following form
\aln{
X(\mbfj) = \int_{\bbr^{\bbz^d}} g_\mbfj \dtv M, ~~~~ \mbfj \in \bbz^d
}
where $M$ is an $\sas$ random measure on $\bbr^{\bbz^d}$ whose control measure $m$ is a probability measure under which the projections $\{g_\mbfj : \mbfj \in \mathbb{Z}^d \}$  are i.i.d. random variables with finite absolute $\alpha^{th}$ moment.

First we consider the case where under $m$, $\{g_\mbfj: \mbfj \in \bbz^d\}$ are  i.i.d. positive Pareto random variables with
\alns{
m(g_\mathbf{0}> x) = \begin{cases} x^{-\gamma} & \text{ if } x \ge 1, \\ 1 & \text{ if } x< 1. \end{cases}
}
for some $\gamma > \alpha$. From Example 6.1 in \cite{roy:samorodnitsky:2008}, we get that
\alns{
B_n \sim c_{p,\gamma}^{1/\alpha} 2^{d/\gamma} n^{d/\gamma} \text{ as } n \to \infty
}
for some positive constant $c_{p, \gamma}$ and $B_n^\inv M_n$ converges weakly to Frech\'{e}t random variable. So Theorem \ref{chap5_thm:asymptotics_alternative} applies with $d_n = n^{d/\gamma}$ and  we get
\alns{
T_n \pprobconv 0
}
as $n \to \infty$. Hence the level-$\beta$ test rejects $H_0$ when $T_n < \tau_\beta$, is consistent.

Now we consider the special case where under $m$, $\{g_\mbfj : \mbfj \in \bbz^d\}$ is a sequence of i.i.d. standard normal random variables. Then $\{ X_\mbfj\}_{\mbfj \in \bbz^d}$ has the same distribution as the process $\{c_\alpha A^{1/2} G_\mbfj\}_{\mbfj \in \bbz^d}$, where $ G_\mbfj$'s are i.i.d. standard Gaussian random variables, $A$ is a positive $\alpha/2$-stable random variable independent of $\{G_\mbfj : \mbfj \in \bbz^d\}$ with Laplace transform $\exptn (e^{-tA}) = e^{-t^{\alpha/2}}$ and $c_\alpha = \sqrt{2} \Big(\exptn( |G_\mathbf{0}|^\alpha)\Big)^{1/\alpha}$; see section 3.7 in \cite{samorodnitsky:taqqu:1994}.  Then from Example 6.1 in \cite{roy:samorodnitsky:2008}, we get that
\alns{
B_n \sim \sqrt{2d \log 2n}
}
such that
\alns{
B_n^\inv  M_n \Rightarrow A^{1/2},
}
which is a positive random variable.

 So we can apply Theorem \ref{chap5_thm:asymptotics_alternative} with $d_n = \sqrt{2d \log 2n}$ and obtain
\alns{
T_n \pprobconv 0
}
as $n \to \infty$. Hence level-$\beta$ test that rejects $H_0$ if $T_n < \tau_\beta$ is consistent .

\end{example}

\begin{example} \label{exmple:effective_dim}

We shall first review the basic notions and notations from \cite{roy:samorodnitsky:2008}. Note that the group $R = \{\phi_\mbft : \mbft \in \bbz^d \}$ of invertible non-singular transformations on $(\bbe,m)$ is a finitely generated abelian group. Define the group homomorphism
\alns{
\Phi : \bbz^d \to R
}
such that $\Phi(\mbft) = \phi_\mbft$ for all $\mbft \in \bbz^d$. The kernel of this group homomorphism is $\ker(\Phi) = \{\mbft \in \bbz^d : \phi_\mbft = id_\bbe\}$ where $id_\bbe$ denotes the identity map on $\bbe$. Being a subgroup of $\bbz^d$, $\ker(\Phi)$ is a free abelian group. By first isomorphism theorem of groups, we have
\alns{
R \simeq \bbz^d / \ker(\Phi).
}
Because of structure theorem for finitely generated abelian groups (Theorem 8.5 in \cite{lang:2002}),  $R$  can be written as the direct sum of a free abelian group $\bar{F}$ (the free part) and a finite abelian group $\bar{N}$ (the torsion part). So we get
$$R = \bar{F} \oplus \bar{N}.$$
We assume that $1 \le rank(\bar{F}) = p < d$. Since $\bar{F}$ is free, there exists an injective group homomorphism
$$\Psi : \bar{F} \to \bbz^d$$
such that $\Phi \circ \Psi = id_{\bar{F}}$. Clearly $F= \Psi(\bar{F})$ is a free subgroup of $\bbz^d$ of rank $p$.

$F$ should be regarded as the effective index set and its rank $p$ becomes the effective dimension of the random field. It was shown in \cite{roy:samorodnitsky:2008} that
\alns{
\frac{1}{(2n+1)^{p/\alpha}} M_n \Rightarrow \begin{cases} C_\mbfx Z_\alpha & \text{ if } \{\phi_\mbft\}_{\mbft \in F} \text{ is not conservative,} \\
0 & \text{ if }  \{\phi_\mbft\}_{\mbft \in F} \text{ is conservative.}\end{cases}
}

In the above setup, if $1\le p < d$, and $\{\phi_\mbft\}_{\mbft \in \bbz^d}$ is not conservative, then using Theorem \ref{chap5_thm:asymptotics_alternative} with $d_n = (2n+1)^{p/\alpha}$, we get that
\alns{
T_n \pprobconv 0
}
as $n \to \infty$.
In particular, the level-$\beta$ test that rejects when $T_n < \tau_\beta$ is consistent.

\end{example}

\section{Numerical Experiments} \label{sec:simulation}

In this section, we consider some examples where the underlying group action is conservative. We shall simulate the empirical power of the proposed test of level $\beta = 10\%$ in those particular cases. It will be clear from the tables below that if we use small values of $\varrho$, then the rejection will be very frequent and hence our test will become less reliable. On the other hand, a large value of $\varrho$ results in fewer rejections and hence the power decreases for each fixed $\alpha$.  We shall also observe that the empirical power decreases as $\alpha$ increases for every fixed $\varrho$. So it seems that we need to choose a smaller  value of $\varrho$ as $\alpha$ increases. So there is an inverse relation between $\varrho$ and $\alpha$. In all the examples, however, as $n$ increases, the empirical power increases to $1$ for all values of $\varrho$ and $\alpha$ confirming the consistency of the proposed test.

\begin{expt}
Consider the set up described in the Example~\ref{chap5_subsec:subgaussian}.  For the simulation purpose, we consider the following alternative representation of the sub-Gaussian random field. Suppose that $\{G_\mbfj : \mbfj \in \bbz^2\}$ is a collection  of i.i.d. standard Gaussian random variables and $A$ is a positive $\alpha/2$-stable random variable independent of the collection  $\{G_\mbfj : \mbfj \in \bbz^2\}$ with Laplace transform $\exptn(e^{-tA}) = e^{t^{\alpha/2}}$. Let $c_\alpha = \sqrt{2} \Big( \exptn (|G_0|^\alpha) \Big)^{1/\alpha}$.  The sub-Gaussian random field has the same distribution as the collection of random variables $\{ c_\alpha A^{1/2} G_\mbfj : \mbfj \in \bbz^2\}$. It easy to simulate the i.i.d. standard Gaussian random variables  and the random variable $A$ is simulated following the method given in Page~3 of \cite{weron:weron:1995}. In the following tables, we compute the empirical power of the proposed test of level $10\%$ based on the ratio of maxima taken over two disjoint blocks.

\begin{longtable}[c]{|c|c|c|c|c|c|c|}
\cline{1-7}
$\varrho$ &  \multicolumn{3}{c|}{$\alpha = .7$} & \multicolumn{3}{c|}{$\alpha = .9$}\\
 \cline{2-7}
& $n=80$ & $n=90$ & $n=100$ & $n=80$ & $n=90$ & $n=100$ \\
\hline
\hline
0.61 &  1 & 1  & 1 & 1 & 1 &  1\\
\hline
0.62 &  1 & 1 & 1 & 1 & 1 &  1 \\
\hline
0.63 & 1  & 1 & 1 & 1 & 1 &  1 \\
\hline
0.64 & 1  & 1 & 1 & 1 & 1 & 1 \\
\hline
0.65 &  1 & 1 & 1 & 1 & 1 &  1\\
\hline
0.66 &  1 & 1 & 1 & 1  & 1 & 1 \\
\hline
0.67 &  1 & 1 & 1 & 1 & 1 & 1\\
\hline
0.68 & 1  & 1 & 1 & 0.9975 & 1 & 1\\
\hline
0.69 &  1 & 1 & 1 &  0.9975 & 1  &   1\\
\hline
0.70 &  1 & 1 & 1 &  0.9875 & 0.9975 & 1 \\
\hline
\caption{Empirical power for $\alpha =0.7$ and $0.9$}
\end{longtable}

\begin{longtable}[c]{|c|c|c|c|c|c|c|}
\cline{1-7}
$\varrho$ &  \multicolumn{3}{c|}{$\alpha = 1.1$} & \multicolumn{3}{c|}{$\alpha = 1.3$}\\
 \cline{2-7}
& $n=80$ & $n=90$ & $n=100$ & $n=80$ & $n=90$ & $n=100$ \\
\hline
\hline
0.61 &  1  & 1 & 1 & 1 & 1 &  1\\
\hline
0.62 &  1 & 1 & 1 & 1 & 1 &  1 \\
\hline
0.63 & 1  & 1 & 1 & 1 & 1 &  1 \\
\hline
0.64 & 1  & 1 & 1 & 1 & 1 & 1 \\
\hline
0.65 &  1 & 1 & 1 & 1 & 1 &  1\\
\hline
0.66 &  1 & 1 & 1 & 1  & 1 & 1 \\
\hline
0.67 &  0.9975 & 1 & 1 & 1 & 1 & 1\\
\hline
0.68 & 0.9950 & 1 & 1 & 0.9750  & 0.9875 & 0.9875\\
\hline
0.69 & 0.9850  & 1 & 0.9975 &  0.9500  & 0.9575 & 0.9875\\
\hline
0.70 &  0.9600  & 0.9975 & 0.9900 &  0.8775  & 0.9375 & 0.9775 \\
\hline
\caption{Empirical power for $\alpha =1.1$ and $1.3$.}
\end{longtable}

\begin{longtable}[c]{|c|c|c|c|c|c|c|}
\cline{1-7}
$\varrho$ &  \multicolumn{3}{c|}{$\alpha = 1.5$} & \multicolumn{3}{c|}{$\alpha = 1.7$}\\
 \cline{2-7}
& $n=80$ & $n=90$ & $n=100$ & $n=80$ & $n=90$ & $n=100$ \\
\hline
\hline
0.61 &  1 & 1 &  1 & 1 & 1 & 1\\
\hline
0.62 &  1 & 1 & 1 & 1 & 1 & 0.9975 \\
\hline
0.63 & 0.9975 & 1 & 1 & 1 & 1 &  0.995 \\
\hline
0.64 & 0.9975 & 1 & 1 & 0.9925 & 1 &  0.995 \\
\hline
0.65 &  0.9925  & 1 & 1 & 0.9875 & 0.9925 & 0.9975\\
\hline
0.66 &  0.995 &  1 & 0.995 & 0.9650  & 0.9725   & 0.9875 \\
\hline
0.67 &  0.9700 & 0.9825 & 0.9975 & 0.9125  & 0.9825 & 1\\
\hline
0.68 & 0.9325 & 0.9650 & 0.9925 & 0.8700   & 0.9300 & 0.9575\\
\hline
0.69 & 0.8825 & 0.9150  & 0.9600 & 0.7625 & 0.8925  & 0.9175\\
\hline
0.70 &  0.7625 & 0.8650  & 0.9375 & 0.6800 & 0.8125 & 0.8725 \\
\hline
\caption{Empirical power for $\alpha =1.5$ and $1.7$.}
\end{longtable}

\end{expt}

\begin{expt}

In this example, we consider a stationary $\sas$ random field $\{X(\mbft) : \mbft = (t_1, t_2, t_3) \in \bbz^3\}$ admitting the following integral representation
\aln{
X(\mbft) = \int_{\mathbb{Z}} f_{(t_1,t_2,t_3)}(x) M(\dtv x) =  \int_{\mathbb{Z}}  f(x-t_1+t_2) M(\dtv x) \label{eq:exmple_eff_dim}
}
where $M$ is an $\sas$ random measure on $\bbz$ with counting measure as control measure and $f: \bbz \to \bbr$ such that
\alns{
f(u) = \begin{cases} 1 & \text{ if } u=0 \\ 0 & \text{ otherwise.} \end{cases}
}
Note that in this case, for each $\mbft = (t_1, t_2, t_3) \in \bbz^3$, $\phi_{(t_1, t_2, t_3)}(x) = (x-t_1 + t_2)$, $x \in \bbz$. This is a special case of Example~\ref{exmple:effective_dim} and the effective dimension of the underlying group action is $1$.

It is clear that for every fixed integer $c$, the random variables $X(\mbft)$ are the same as long as $\mbft=(t_1, t_2, t_3)$ lies on the plane $t_1-t_2=c$. Also, as $c$ runs over $\bbz$, these random variables form an i.i.d.\ collection. Based on this observation, we simply simulate i.i.d.\ $\sas$ random variables (following the method sated in Page~3 of \cite{weron:weron:1995}) indexed by $\bbz$ and use them appropriately for our test. The following tables contain the simulated empirical power of the proposed test conducted at $10\%$ level of significance.

\begin{longtable}[c]{|c|c|c|c|c|c|c|}
\cline{1-7}
$\varrho$ &  \multicolumn{3}{c|}{$\alpha = .7$} & \multicolumn{3}{c|}{$\alpha = .9$}  \\
 \cline{2-7}
 & $n=1000$ & $n=1500$ & $n=2000$ & $n=1000$ & $n=1500$ & $n=2000$  \\
\hline
\hline
0.61 & 0.975 & 0.965   & 0.975  & 0.9525 & 0.9675  & 0.98125  \\ \hline
0.62 & 0.955 & 0.9625  & 0.98   & 0.945  & 0.9675  & 0.98375  \\  \hline
0.63 & 0.9475 & 0.9725 & 0.975  & 0.9475 & 0.9625  & 0.97375  \\ \hline
0.64 & 0.9525 & 0.96   & 0.9675 & 0.925  & 0.95    & 0.94875  \\ \hline
0.65 & 0.9275 & 0.96   & 0.9325 & 0.9325 & 0.95625 & 0.95     \\ \hline
0.66 & 0.9175 & 0.96   & 0.9525 & 0.9425 & 0.9525  & 0.95     \\ \hline
0.67 & 0.9225 & 0.9425 & 0.9325 & 0.9125 & 0.92375 & 0.93875  \\ \hline
0.68 & 0.91 & 0.92     & 0.93   & 0.9075 & 0.9125  & 0.9425   \\ \hline
0.69 & 0.8875 & 0.915  & 0.9275 & 0.9225 & 0.92375 & 0.92875  \\ \hline
0.70 & 0.88 & 0.9075   &  0.92  & 0.9    & 0.91625 & 0.9125   \\

\hline
\caption{Empirical power for $\alpha =0.7$ and $ 0.9$.}
\end{longtable}

\begin{longtable}[c]{|c|c|c|c|c|c|c|}
\cline{1-7}
$\varrho$ &  \multicolumn{3}{c|}{$\alpha = 1.1$} & \multicolumn{3}{c|}{$\alpha = 1.3$} \\
 \cline{2-7}
 & $n=1000$ & $n=1500$ & $n=2000$ & $n=1000$ & $n=1500$ & $n=2000$  \\
\hline
\hline
0.61 & 0.95500 & 0.97250 & 0.97250 & 0.96125 & 0.98125 & 0.9825 \\ \hline
0.62 & 0.95250 & 0.9700  & 0.97625 & 0.9525  & 0.955   & 0.97375\\  \hline
0.63 & 0.9425  & 0.97125 & 0.9675  & 0.93125 & 0.9515  & 0.96875 \\ \hline
0.64 & 0.945   & 0.9675  & 0.965   & 0.9325  & 0.95625 & 0.95625 \\ \hline
0.65 & 0.925   & 0.9525  & 0.97125 & 0.9375  & 0.95875 & 0.96375 \\ \hline
0.66 & 0.9175  & 0.94375 & 0.94625 & 0.9175  & 0.93875 & 0.94375\\ \hline
0.67 & 0.91125 & 0.9425  & 0.95375 & 0.9035  & 0.935   & 0.96625 \\ \hline
0.68 & 0.91125 & 0.94125 & 0.935   & 0.88    & 0.93125 & 0.95 \\ \hline
0.69 & 0.895   & 0.90375 & 0.95    & 0.90375 & 0.935   & 0.93375 \\ \hline
0.70 & 0.86375 & 0.8875  & 0.92625 & 0.875   & 0.88    & 0.91625 \\

\hline
\caption{Empirical power for $\alpha =1.1$ and $1.3$.}
\end{longtable}

\begin{longtable}[c]{|c|c|c|c|c|c|c|}
\cline{1-7}
$\varrho$ &  \multicolumn{3}{c|}{$\alpha = 1.5$} & \multicolumn{3}{c|}{$\alpha = 1.7$} \\
 \cline{2-7}
 & $n=1000$ & $n=1500$ & $n=2000$ & $n=1000$ & $n=1500$ & $n=2000$  \\
\hline
\hline
0.61 & 0.96125 & 0.97750 & 0.98125 & 0.9625  & 0.98    & 0.98  \\ \hline
0.62 & 0.96250 & 0.97500 & 0.97125 & 0.96625 & 0.97375 & 0.97625 \\  \hline
0.63 & 0.95    & 0.95625 & 0.97625 & 0.96625 & 0.96875 & 0.95750 \\ \hline
0.64 & 0.95625 & 0.96125 & 0.96125 & 0.94875 & 0.96250 & 0.96125 \\ \hline
0.65 & 0.93500 & 0.95250 & 0.94875 & 0.9425  & 0.95375 & 0.97125 \\ \hline
0.66 & 0.91625 & 0.93625 & 0.96    & 0.93875 & 0.9575  & 0.9575   \\ \hline
0.67 & 0.92625 & 0.94125 & 0.94125 & 0.91    & 0.93625 & 0.9475  \\ \hline
0.68 & 0.90875 & 0.93875 & 0.9375  & 0.89875 & 0.91    & 0.9475 \\ \hline
0.69 & 0.91375 & 0.92625 & 0.91625 & 0.8975  & 0.95    & 0.91 \\ \hline
0.70 & 0.89375 & 0.89125 & 0.92    & 0.89125 & 0.90375 & 0.92 \\

\hline
\caption{Empirical power for $\alpha =1.5$ and $1.7$.}
\end{longtable}
\end{expt}

\begin{expt}
Next, we consider another example of stationary $\sas$ random field admitting the integral representation \eqref{eq:exmple_eff_dim} with $f: \bbz \to \bbr$ such that
\alns{
f(u) = \begin{cases} 1 & \text{ if } u= 0, -1 \\ 0 & \text{ otherwise.} \end{cases}
}
This example is similar to the previous one with the same effective dimension $1$. In this case also, for each fixed $c \in \bbz$, the collection $\{X(\mbft): t_1 - t_2 = c\}$ consists of a single random variable. However, as $c$ runs over $\bbz$, these random variables no longer remain independent. Rather, they form a moving average process of order $1$ with $\sas$ innovations and unit coefficients. Using this observation, we simulate the random filed easily. The following tables contain the simulated empirical power of the proposed test of level $10\%$.

\begin{longtable}[c]{|c|c|c|c|c|c|c|}
\cline{1-7}
$\varrho$ &  \multicolumn{3}{c|}{$\alpha = 0.7$} & \multicolumn{3}{c|}{$\alpha = 0.9$} \\
 \cline{2-7}
 & $n=1000$ & $n=1500$ & $n=2000$ & $n=1000$ & $n=1500$ & $n=2000$  \\
\hline
\hline
0.61 & 0.9675   & 0.96875  &  0.97      & 0.9575    & 0.97        & 0.97625  \\ \hline
0.62 & 0.95875  & 0.9525   & 0.9725   & 0.955     & 0.9625   & 0.98375 \\  \hline
0.63 & 0.955   & 0.9525   & 0.98         & 0.95375 & 0.95625 & 0.975  \\ \hline
0.64 & 0.955    & 0.9525    & 0.9675    & 0.94875 & 0.96375 & 0.95875  \\ \hline
0.65 & 0.92     & 0.9475    & 0.9625   & 0.94625 & 0.94625 & 0.95625  \\ \hline
0.66 & 0.91875  & 0.93875 &  0.94875 & 0.92625 & 0.94       & 0.96125 \\ \hline
0.67 & 0.92375 & 0.9375  & 0.94375 & 0.935     &  0.94      & 0.95125  \\ \hline
0.68 & 0.9175  & 0.91875  &  0.9225   & 0.91875 & 0.91875 & 0.94 \\ \hline
0.69 & 0.9       & 0.9225    & 0.9375    & 0.9075  &  0.92625 & 0.925  \\ \hline
0.70 & 0.895   & 0.905      & 0.91625  & 0.88125 & 0.8925  &  0.915 \\

\hline
\caption{Empirical power for $\alpha =0.7$ and $0.9$.}\\
\end{longtable}

\begin{longtable}[c]{|c|c|c|c|c|c|c|}
\cline{1-7}
$\varrho$ &  \multicolumn{3}{c|}{$\alpha = 1.1$} & \multicolumn{3}{c|}{$\alpha = 1.3$} \\
 \cline{2-7}
 & $n=1000$ & $n=1500$ & $n=2000$ & $n=1000$ & $n=1500$ & $n=2000$  \\
\hline
\hline
0.61          & 0.9725   & 0.98       &     0.9775                   & 0.97       & 0.98125 & 0.98125 \\ \hline
0.62         & 0.9575   & 0.95875 &     0.97125                    & 0.955     & 0.9575  & 0.9725 \\  \hline
0.63         & 0.9575   & 0.9625   &     0.9725                   & 0.955     & 0.95375 & 0.96125\\ \hline
0.64         & 0.9375   & 0.9525   &     0.96875                     & 0.95375 & 0.9725   & 0.9625\\ \hline
0.65         & 0.9425   & 0.95       &      0.96875                & 0.9375   & 0.955     & 0.9575  \\ \hline
0.66         & 0.93625 & 0.93625&      0.95625                   & 0.9325   & 0.9425   & 0.95 \\ \hline
0.67         & 0.915      & 0.9175   &       0.93875                      & 0.9275   & 0.93625 & 0.93875\\ \hline
0.68         & 0.91625  & 0.9275  &      0.95                  & 0.915      & 0.90875 & 0.9275\\ \hline
0.69         & 0.9025   & 0.93875 &      0.93375                    & 0.88875 & 0.9075   & 0.92375\\ \hline
0.70         & 0.8825   & 0.9025   &      0.92625                      & 0.885     & 0.88375 & 0.93     \\

\hline
\caption{Empirical power for $\alpha =1.1$ and $1.3$.} \\
\end{longtable}

\begin{longtable}[c]{|c|c|c|c|c|c|c|}
\cline{1-7}
$\varrho$ &  \multicolumn{3}{c|}{$\alpha = 1.5$} & \multicolumn{3}{c|}{$\alpha = 1.7$} \\
 \cline{2-7}
 & $n=1000$ & $n=1500$ & $n=2000$ & $n=1000$ & $n=1500$ & $n=2000$  \\
\hline
\hline
0.61 & 0.97875  & 0.97        & 0.9825   & 0.96125 & 0.97875 & 0.9725 \\ \hline
0.62 & 0.9575 & 0.98       & 0.9725       & 0.95      & 0.98125 &  0.9525\\  \hline
0.63 & 0.95875 & 0.97375 & 0.96625  & 0.95625 & 0.97625 &  0.95125\\ \hline
0.64 &  0.95375 & 0.96     & 0.96375    &0.93875 & 0.9625  &  0.95875\\ \hline
0.65 & 0.9475 & 0.96375 & 0.965         & 0.95125 & 0.955   &  0.96375 \\ \hline
0.66 &  0.92125 & 0.94       & 0.96375  & 0.93625 & 0.97125  &  0.94125\\ \hline
0.67 & 0.91375 & 0.955     & 0.96         & 0.90125 & 0.94875 &  0.92875\\ \hline
0.68 & 0.9        & 0.93       & 0.94875    & 0.91875 & 0.94875  &  0.92\\ \hline
0.69 & 0.9025 & 0.92625 & 0.93          & 0.90375 & 0.9225  &  0.91625 \\ \hline
0.70 & 0.87625 & 0.905     & 0.93625  & 0.86875 & 0.88625  & 0.91625 \\

\hline
\caption{Empirical power for $\alpha =1.5$ and $1.7$.}
\end{longtable}

\end{expt}

\begin{remark} For real data, we need to choose the blocksize (i.e., $\varrho \in (0,1)$) before performing this test. Even though $\alpha$ and the best performing $\varrho$ have an inverse relationship (as explained in the beginning of this section), it is observed in the above tables that $\varrho \approx 0.65$ seem to perform well for a broad class of alternatives. Therefore, in absence of further knowledge, we prescribe $\varrho = 0.65$ to be used for our test.
\end{remark}

\section{Proofs of Theorems \ref{chap5_thm:sas_random_field_moving_average} and \ref{chap5_thm:asymptotics_alternative}} \label{chap5_sec:asymptotics_null}

\subsection{Proof of Theorem \ref{chap5_thm:sas_random_field_moving_average} }


\begin{proof}[Proof of \ref{chap5_thm:sas_random_field_moving_average}]
Without loss of generality, we shall assume that $\mbfx$ admits moving average representation. This is because under our hypothesis,  we can use the decomposition \eqref{chap5_eq:decomp_stable_random_field} with a non-trivial dissipative part and the conservative part does not contribute to the maxima after scaling. In particular, this means that
\alns{
X(\mbfj) = \int_W \int_{\bbz^d} f(u, \mathbf{v}-{\mbfj}) M(\dtv u, \dtv \mbfv), ~~~~ \mbfj \in \bbz^d,
}
where $M$ is an $\sas$ random measure on $W \times \bbz^d$ with control measure $m = \nu \otimes l$ on $\mathcal{B}(W \times \bbz^d)  $ where $l$ is counting measure on $\bbz^d$. Also $f \in L^\alpha(W \times \bbz^d, \nu \otimes l)$. Let $Box(L) = \{ \mbfj \in \bbz^d : |j_1| \le L,  \ldots, |j_d| \le L\}$ i.e., it is an $L$ neighbourhood around the origin.  Define
\aln{
X(\mbfj, L) = \int_W \int_{\bbz^d} f(u, \mbfv - \mbfj) \bbo_{W \times Box(L)} (w, \mbfv - \mbfj) M(\dtv u, \dtv \mbfv)
}
for all positive integer $L$. Define
\aln{
& M_n(L) = \max \bigg\{ |X(\mbfj,L)| : \mbfj \in Box(n) \bigg\} \\
   \text{ and } ~~~~~~ &\frakm_n(L) = \max \bigg\{ |X(\mbfj, L)|: \mbfj \in (2n + [n^\varrho]) \mbfe_1 + Box([n^\varrho]) \bigg\}.
}

Fix $L \in \bbn$. It is important to observe that, as an easy consequence of Theorem 4.3 in \cite{roy:samorodnitsky:2008}, we have
\alns{
\frac{1}{(2n+1)^{d/\alpha}} M_n(L) \Rightarrow Y_1(L),
}
where $Y_1(L)$ is a positive random variable with distribution function
\aln{
\pprob( Y_1(L) \le y) = \exp \bigg\{ - C_\alpha K_{\mathbf{X}}^\alpha (L)  y^{-\alpha}\bigg\} \label{chap5_eq:distn_y_one_L}
}
with
\alns{
K_{\mathbf{X}}^\alpha (L) = \int_W \sup_{\mbfj \in Box(L)} |f(w, \mbfj)|^\alpha \nu(\dtv w).
}
Similar facts lead to the observation that $(2[n^\varrho]+1)^{- d/ \alpha} \frakm_n$ converges weakly to a random variable with same distribution as that of $Y_1(L)$. It is important to note that for all $n \ge 2L + 1$, we have $\{X(\mbfj, L) : \mbfj \in Box(n)\}$ and  $\{ X(\mbfj, L) : \mbfj \in (2n + [n^\varrho]) \mbfe_1 + Box([n^\varrho]) \}$ are independent random vectors which follows from Theorem 3.5.3 in \cite{samorodnitsky:taqqu:1994}.  So $M_n$ and $\frakm_n$ are independent for all $n \ge 2L+1$. Combining these facts we get that
\alns{
\bigg(\frac{1}{(2n+1)^{d/\alpha}} M_n(L), \frac{1}{(2[n^\varrho]+1)^{ d /\alpha}} \frakm_n (L) \bigg) \Rightarrow (Y_1(L), Y_2(L))
}
where $Y_1(L)$ and $Y_2(L)$ are independently and identically distributed with law as specified in  \eqref{chap5_eq:distn_y_one_L}. It is easy to see that as $L \to \infty$, $K_{\mathbf{X}(L)} \to K_{\mathbf{X}}$. So we have
\alns{
(Y_1(L), Y_2(L)) \Rightarrow (Y_1, Y_2)
}
as  $L \to \infty$.

Now it only remains to show that for every fixed $\epsilon>0$,
\aln{
\lim_{L \to \infty} \limsup_{n \to \infty} \pprob \bigg(\frac{1}{(2n+1)^{d/\alpha}} |M_n - M_n(L)| + \frac{1}{(2[n^\varrho] +1)^{\varrho d / \alpha}} |\frakm_n - \frakm_n(L)| > \epsilon \bigg) =0. \label{chap5_eq:conv_togather_1}
}
To show \eqref{chap5_eq:conv_togather_1}, it is enough to show that
\alns{
\lim_{L \to \infty} \limsup_{n \to \infty} \pprob \bigg( \frac{1}{(2n+1)^{d/\alpha}}  |M_n - M_n(L)| > \epsilon/2\bigg) =0.
}

Recall that
\alns{
B_n =\bigg( \int_\bbe \max_{\mbfj \in Box(n)} \Big|f(w,\mbfj) \Big|^\alpha m(\dtv w) \bigg)^{1/\alpha}
}
and define a new probability measure $\lambda_n$ on $\bbe= W \times \bbz^d$ for every fixed $n$,
\aln{
\frac{\dtv \lambda_n}{\dtv m}(w,\mbfj) = B_n^{-\alpha} \max_{\mbfj \in Box(n)} \Big|f(w,\mbfj) \Big|^\alpha.
}

Using Theorem 3.5.6 and Corollary 3.10.4  from \cite{samorodnitsky:taqqu:1994}, we know that for $\mbfj \in Box(n)$,
\alns{
X(\mbfj) \eqd C_\alpha^{1/ \alpha} \sum_{i=1}^\infty \varepsilon_i \Gamma^{-1/\alpha}_i f(U_i^{(n)}, \mathbf{V}_i^{(n)} - \mbfj), ~~~~~~~~ \mbfj \in Box(n),
}
where $C_\alpha$ is a constant as specified in \eqref{chap5_eq:sas_constant}, $\{ \varepsilon_i : i \ge 1\}$ is a collection  of i.i.d. $\{\pm 1 \}$-valued symmetric random variables, $\{\Gamma_i : i \ge 1\}$ is the collection of arrival times of the unit rate Poisson process and $\{(U_i^{(n)}, \mathbf{V}_i^{(n)} ) : i \ge 1 \}$ is a collection of i.i.d. $\bbe = W \times \bbz^d$-valued random variables with common law $\lambda_n$ for every fixed $n$. It is straight forward to check that
\alns{
X(\mbfj, L) \eqd C_\alpha^{1/ \alpha} \sum_{i=1}^\infty \varepsilon_i \Gamma^{-1/\alpha}_i f(U_i^{(n)}, \mathbf{V}_i^{(n)} - \mbfj) \bbo_{W \times Box(L)}(U_i^{(n)}, \mathbf{V}_i^{(n)} - \mbfj), ~~~~ \mbfj \in Box(n).
}
 Now note that
\aln{
& \max_{\mbfj \in Box(n)} |X(\mbfj)| - \max_{\mbfj \in Box(n)} |X(\mbfj, L)| \nonumber \\
& = \max_{\mbfj \in Box(n)} \Big|X(\mbfj,L)  + C_\alpha^{1/\alpha} \sum_{i=1}^\infty \varepsilon_i \Gamma_i^{-1/\alpha}  f(U_i^{(n)}, \mathbf{V}_i^{(n)} - \mbfj)  \bbo_{W \times (Box(L))^c} (U_i ^{(n)}, \mathbf{V}_i^{(n)} -\mbfj) \Big| \nonumber \\
 & \hspace{3cm} - \max_{\mbfj \in Box(n)} |X(\mbfj, L)| \nonumber \\
& \le \max_{\mbfj \in Box(n)} \Big|C_\alpha^{1/\alpha} \sum_{i=1}^\infty \varepsilon_i \Gamma_i^{-1/\alpha} f(U_i^{(n)}, \mathbf{V}_i^{(n)} - \mbfj) \bbo_{W \times (Box(L))^c} (U_i^{(n)} , \mathbf{V}_i^{(n)} -\mbfj) \Big| \label{chap5_eq:difference_upper}
}
where the last inequality follows from the fact that
\alns{
\max_{\mbfj \in \bbz^d}( a_\mbfj + b_\mbfj) \le \max_{\mbfj \in \bbz^d} a_{\mbfj} + \max_{\mbfj \in \bbz^d} b_{\mbfj}
}
for two sequences $\{a_\mbfj : \mbfj \in \bbz^d\}$ and $\{b_\mbfj : \mbfj \in \bbz^d \}$ of positive real numbers. Also note that
\aln{
& \max_{\mbfj \in Box(n)} |X(\mbfj)| - \max_{\mbfj \in Box(n)} |X(\mbfj, L)| \nonumber \\
& = \max_{\mbfj \in Box(n)} |X(\mbfj)|-  \max_{\mbfj \in Box(n)} \Big| X(\mbfj)   \nonumber \\
& \hspace{3.5cm} - C_\alpha^{1/\alpha} \sum_{i=1}^\infty \varepsilon_i \Gamma_i^{-1/\alpha} f(U_i^{(n)}, \mathbf{V}_i^{(n)} - \mbfj) \bbo_{W \times (Box(L))^c} (U_i^{(n)} , \mathbf{V}_i^{(n)} -\mbfj) \Big| \nonumber \\
& \ge  - \max_{\mbfj \in Box(n)} \Big|C_\alpha^{1/\alpha} \sum_{i=1}^\infty \varepsilon_i \Gamma_i^{-1/\alpha} f(U_i^{(n)}, \mathbf{V}_i^{(n)} - \mbfj) \bbo_{W \times (Box(L))^c} (U_i^{(n)} , \mathbf{V}_i^{(n)} -\mbfj) \Big| \label{chap5_eq:difference_lower}
}
using the fact that any two sequence of real numbers $\{ a_\mbfj : \mbfj \in \bbz^d \}$ and $\{b_\mbfj : \mbfj \in \bbz^d\}$ satisfy the following inequality
\alns{
\max_{\mbfj \in \bbz^d} |a_\mbfj| - \max_{\mbfj \in \bbz^d} |a_\mbfj - b_\mbfj| \ge - \max_{\mbfj \in \bbz^d} |b_\mbfj|.
}

Now combining the the upper bound in \eqref{chap5_eq:difference_upper} and the lower bound obtained in \eqref{chap5_eq:difference_lower}, we get that
\alns{
\Big|\max_{\mbfj \in Box(n)}|X(\mbfj)|  - \max_{\mbfj \in Box(n)} |X(\mbfj, L )| \Big| \le \max_{\mbfj \in Box(n)} |X^{(c)}(\mbfj, L)|,
}
where
$$X^{(c)}(\mbfj, L) =C_\alpha^{1/\alpha} \sum_{i=1}^\infty \varepsilon_i \Gamma_i^{-1/\alpha} f(U_i^{(n)}, \mathbf{V}_i^{(n)} - \mbfj) \bbo_{W \times (Box(L))^c} (U_i^{(n)} , \mathbf{V}_i^{(n)} -\mbfj). $$
It is easy to verify that $\{X^{(c)}(\mbfj, L) : \mbfj \in Box(n)\}$ is a stationary $\sas$ random field which admits mixed moving average representation. Hence we can again use Theorem 4.3  in \cite{roy:samorodnitsky:2008}, to get that
\alns{
\frac{1}{(2n+1)^{d/\alpha}}\max_{\mbfj \in Box(n)} | X^{(c)}(\mbfj, L)  | \Rightarrow C_\alpha^{1/\alpha} K^{(c)}_{\mathbf{X}}(L) Z_\alpha,
}
where $Z_\alpha$ is a Frechet random variable with distribution function
\alns{
\pprob(Z_\alpha < x) = e^{-x^{-\alpha}}
}
and
\alns{
K^{(c)}_\mathbf{X}(L) = \int_W \sup_{\mbfj \in \bbz^d \setminus Box(L)} |f(w, \mbfj)|^\alpha \nu(\dtv w).
}

Finally we have that
\aln{
& \limsup_{n \to \infty} \pprob \Big( (2n+1)^{-d/\alpha} |M_n - M_n(L)| > \epsilon/2 \Big) \nonumber \\
& = \limsup_{n \to \infty} \pprob \bigg((2n+1)^{-d/\alpha} \Big|\max_{\mbfj \in Box(n)} |X(\mbfj)| - \max_{\mbfj \in Box(n)} |X(\mbfj, L)| \Big| > \epsilon/2 \bigg) \nonumber \\
& \le \limsup_{n \to \infty} \pprob \bigg( (2n+1)^{-d/\alpha} \max_{\mbfj \in Box(n)} |X^{(c)}(\mbfj, L)| > \epsilon/2 \bigg) \nonumber \\
& = \pprob \bigg( C_\alpha^{1/\alpha} K_{\mathbf{X}}^{(c)}(L) Z_\alpha > \epsilon/2 \bigg). \label{chap5_eq:conv_togather_upper_bound}
}
It is easy to see that as $L \to \infty$, $K^{(c)}_\mathbf{X}(L) \to 0$ and hence the expression in \eqref{chap5_eq:conv_togather_upper_bound} vanishes. This completes the proof of \eqref{chap5_eq:conv_togather_1}.
\end{proof}

\subsection{Proof of Theorem \ref{chap5_thm:asymptotics_alternative}}

From the fact that $\{d_n^\inv M_n\}$ and $\{d_n M_n^\inv\}$ are tight sequences,  it follows using stationarity that $\{ \Big(d([n^\varrho])\Big)^\inv \frakm_n\}$ and $\{d([n^\varrho]) \frakm_n^\inv \}$ are tight sequences, where $d(n):=d_n$.
Note that as a product of two tight sequences,
\aln{
\frac{d([n^\varrho])}{d_n} \frac{M_n}{\frakm_n} \label{chap5_eq:prod_tight}
}
is also a tight sequence of random variables. Observe that
\alns{
\frac{d([n^\varrho])}{d(n)} \sim n^{(d/\alpha - \eta)(\varrho -1)} \frac{L([n^\rho])}{L(n)}
}
as $n \to \infty$, Note also that
\alns{
T_n & = \bigg(\frac{2[n^\varrho] + 1}{2n +1}\bigg)^{d/\alpha} \frac{M_n}{\frakm_n} \\
& \sim n^{d/\alpha (\varrho - 1)} \frac{M_n}{\frakm_n} \\
&\sim\frac{L(n)}{L([n^\varrho])} n^{\eta(\varrho-1)} \frac{d([n^\varrho])}{d(n)} \frac{M_n}{\frakm_n},
}
from which the result follows because \eqref{chap5_eq:prod_tight} is tight and
$$\frac{L(n)}{L([n^\varrho])} n^{\eta(\varrho-1)}\to 0 $$
using Potter bounds (see, e.g. \cite{resnick:1987}).

\vspace{2cm}

\noindent{\bf{Acknowledgements.}} The authors would like to thank the anonymous referees for careful reading and detailed comments which significantly improved the paper. This research was partially supported by the project RARE-318984 (a Marie Curie FP7 IRSES
Fellowship) at Indian Statistical Institute.  Parthanil Roy was also supported by Cumulative Professional Development Allowance.

\bibliographystyle{apt}

\end{document}